\newcommand{\N}{\mathbb{N}}
\newcommand{\R}{{\mathbb{R}}}
\newcommand{\C}{{\mathbb{C}}}
\newcommand{\Z}{{\mathbb{Z}}}
\newcommand{\dd}{{{\rm d}}}
\newcommand{\ii}{{\rm i}}
\newcommand{\BH}{\mathscr{B}(\mathcal{H})}
\renewcommand{\H}{{\mathcal{H}}}
\newcommand{\Dom}{{\operatorname{Dom}}}
\renewcommand{\Re}{\operatorname{Re}}
\renewcommand{\Im}{\operatorname{Im}}
\newcommand{\cf}{\emph{cf.}}
\newcommand{\ie}{{\emph{i.e.}}}
\newcommand{\eg}{{\emph{e.g.}}}
\newcommand{\sgn}{\operatorname{sgn}}
\begin{document}

\theoremstyle{plain}
\newtheorem{define}{Definition}
\newtheorem{theorem}{Theorem}
\newtheorem{lemma}[theorem]{Lemma}
\newtheorem{criterion}[theorem]{Criterion}
\newtheorem{proposition}[theorem]{Proposition}
\newtheorem{corollary}[theorem]{Corollary}
\renewcommand{\proofname}{Proof}
\newtheorem{example}[theorem]{Example}
\theoremstyle{remark}
\newtheorem{remark}{Remark}

\title[Root system]{Root system of singular perturbations of the harmonic oscillator type operators}

\author{Boris Mityagin}
\address[Boris Mityagin]{
Department of Mathematics,
The Ohio State University,
231 West 18th Ave,
Columbus, OH 43210, USA}
\email{mityagin.1@osu.edu}

\author{Petr Siegl}
\address[Petr Siegl]{Mathematishes Institut, Universit\"at Bern, Alpeneggstrasse 22, 3012 Bern, Switzerland \& On leave from Nuclear Physics Institute ASCR, 25068 \v Re\v z, Czech Republic}
\email{petr.siegl@math.unibe.ch}

\subjclass[2010]{47A55, 47A70, 34L10}

\keywords{non-self-adjoint operators, harmonic oscillator, Riesz basis, quadratic forms, singular potentials}

\date{28th September 2015}

\begin{abstract}
We analyze perturbations of the harmonic oscillator type operators in a Hilbert space $\H$, \ie~of the self-adjoint operator with simple positive eigenvalues $\mu_k$ satisfying $\mu_{k+1}-\mu_k \geq \Delta >0$. Perturbations are considered in the sense of quadratic forms. Under a local subordination assumption, the eigenvalues of the perturbed operator become eventually simple and the root system contains a Riesz basis. 
\end{abstract}

\thanks{
P.S. appreciates the kind hospitality and support of OSU allowing his stays there in November 2012 and July 2013 and acknowledges the SCIEX Program; the work has been conducted within the SCIEX-NMS Fellowship, project 11.263. Till March 2013, P.S. has been supported by a grant within the scope of FCT's project PTDC/\ MAT/\ 101007/2008 and partially by FCT's projects PTDC/\ MAT/\
101007/2008 and PEst-OE/MAT/UI0208/2011.
}

\maketitle

\section{Introduction}

This paper deals with the spectrum and eigensystem of perturbations of a self-adjoint operator $A$ in a Hilbert space $\H$. The operator $A$ is of the one dimensional harmonic oscillator type,
\ie~its eigenvalues are simple, positive and satisfy 
\begin{equation}\label{ass.A}
\begin{aligned}
& A \psi_n = \mu_n \psi_n, \quad \|\psi_n\| =1,
& \mu_1 >0, \quad \exists \Delta> 0, \ \forall n \in \N, \
\mu_{n+1} -\mu_n \geq  \Delta;
\end{aligned}
\end{equation}
see also Remark \ref{rem.sim.as}.
The perturbations are not assumed to be symmetric, therefore
the studied operator $T$ is generically non-self-adjoint (and non-normal), 
hence the spectrum typically does not remain real and the basis property of eigensystem is no longer guaranteed.

The main aim is to extend (to cover in particular the $\delta$ potential) the results of \cite{Adduci-2012-10, Adduci-2012-73,Agranovich-1994-28} on sufficient conditions on perturbations guaranteeing that the eigensystem of the perturbed operator contains a Riesz basis. Problems of this type are studied in many works, both classical ones as \cite{DS3,Kato-1966,Markus-1988} and more recent ones, for instance, \cite{Agranovich-1994-28, Shkalikov-2010-269, Wyss-2010-258, Xu-2005-210, Zwart-2010-249}. 

The essential issue in the analysis is that the gaps between the eigenvalues of the unperturbed operator $A$ do not grow. Assuming that the gaps grow, \ie~$\mu_{n+1}-\mu_n \rightarrow + \infty $ and the perturbation $B$ is bounded, Kato proved, \cf~\cite[Thm. V.4.15a, Lem. V.4.17a]{Kato-1966}, that the system of eigenfunctions of $A+B$, plus possibly finite number of associated functions, contains a Riesz basis. The analogous classical theorem allowing also unbounded perturbations can be found in \cite[Thm. XIX.2.7]{DS3}; nevertheless, the growth condition of the gaps is preserved. Constant gaps are allowed in \cite{Agranovich-1994-28}; however, only bounded perturbations satisfy the assumption in \cite{Agranovich-1994-28} and the result is weaker, since only the Riesz basis with brackets is claimed.

Adduci and Mityagin overcome the problem of the non-growing gaps in the study of the harmonic oscillator, \cf~\cite{Adduci-2012-10}, by 
\begin{enumerate}[a)]
	\item using the Hilbert transform as the important technical tool,
	\item replacing the condition of the boundedness of perturbation $B$ by 
	\begin{equation}\label{trad.ass.B}
	\|B \psi_n \| \rightarrow 0 \ {\rm as} \  n \rightarrow \infty,
	\end{equation}
	\item using the following result of Kato; 
\end{enumerate}
\begin{criterion}[{\cite[Lem.V.4.17a]{Kato-1966}}]\label{lem.Kato}
Let $\{P_j\}_{j=0}^\infty$ be a complete family of orthogonal projections in a Hilbert space $\H$, and let $\{Q_j\}_{j=0}^\infty$ be a family of (not necessarily orthogonal) projections such that $Q_jQ_k = \delta_{jk}Q_j$. Assume that
\begin{equation*}
\begin{aligned}
&{\rm Rank} \, P_0 = {\rm Rank} \, Q_0 < \infty, \\
&\forall f \in \H, \quad \sum_{j=1}^{\infty} \|P_j(Q_j-P_j) f\|^2 \leq c_0 \|f\|^2,
\end{aligned}
\end{equation*}
where $c_0$ is a constant smaller than $1$. Then there is a $W \in \BH$ with $W^{-1} \in \BH$ such that $Q_j=W^{-1} P_j W$ for $j \in \N_0:=\N \cup \{0\}$.
\end{criterion}

The condition \eqref{trad.ass.B} has been called by Shkalikov a \emph{local subordinate condition}, see the discussion in \cite{Shkalikov-2012-18} and also \cite[Sec.1]{Adduci-2012-73} for some explanations on this wording. 
Criterion \ref{lem.Kato} substitutes for the often used Bari--Markus criterion, which is given with more restrictive conditions 
\begin{equation*}
\begin{aligned}
&\sum_{j=0}^\infty \| Q_j -P_j\|^2 < \infty, \\
&{\rm Rank} \, P_j = {\rm Rank} \, Q_j < \infty, \quad j \in \N_0,
\end{aligned}
\end{equation*}
see \eg~\cite[Chap.6, Sec. 5.3, Thm. 5.2]{Gohberg-1969} or \cite{Markus-1988}.

For the harmonic oscillator in $L^2(\R)$, the property of Hermite functions, 
\begin{equation}\label{hn.inf.est}
\max_{x \in \R} |h_n(x)| \leq C \, n^{-\frac{1}{12}}, \quad n \in \N,
\end{equation}
can be used to show that \eqref{trad.ass.B} is satisfied for $B$ being, for instance, a multiplication operator by $V \in L^2(\R)$ what is consistently used in \cite{Adduci-2012-10}.

The results of \cite{Adduci-2012-10} for the harmonic oscillator have been extended in \cite{Adduci-2012-73} to the abstract setting with the possibility of the controlled condensation of eigenvalues, \ie~$\mu_{n+1} -\mu_n \geq \kappa n^{\omega-1}$ with fixed $\kappa >0$ and $\omega >1/2$, or the finite clustering of eigenvalues, \ie~there exist fixed values $q>0$ and $\delta>0$ such that $\mu_{n+q}-\mu_n \geq \delta$ for all $n$. In the latter case, similar results have been obtained in \cite{Shkalikov-2010-269} using different methods. However, the $\delta$ potential is not covered by the assumption \eqref{trad.ass.B} which is essential in \cite{Adduci-2012-10, Adduci-2012-73, Shkalikov-2010-269}.

In this paper, we consider perturbations of $A$ in the sense of quadratic forms. (Such a setting has been considered in \cite{Agranovich-1994-28} under the form $p$-subordination assumption, \cf~\eqref{form.p.sub} below.)
At first we define the quadratic form $t:=a+b$, where $a$ corresponds to $A$ and $b$ is the perturbation. The perturbed operator $T$ is associated with the form $t$, see Section \ref{sec.op.def} for details. Such a framework is one way how to include singular perturbations, \cf~\cite[Chap.VI.3.-4.]{Kato-1966} or \cite[\S.1.]{Simon-1971-21} in self-adjoint setting. Our main example is the harmonic oscillator in $L^2(\R)$ perturbed by the $\delta$ potential with complex coupling.  
We remark that the form $b$ does not need to be closed and therefore it does not need to represent an operator in a considered Hilbert space $\H$, distributional potentials are typical cases. 

A straightforward reformulation of the condition \eqref{trad.ass.B}, coming from \cite{Adduci-2012-10, Adduci-2012-73, Shkalikov-2010-269}, would be 
\begin{equation*}\label{trad.ass.B.ref}
\|B\psi_n\|^2 = \sum_{m=1}^{\infty} |b(\psi_n, \psi_m)|^2 \rightarrow 0 \ {\rm as} \ n \rightarrow \infty.
\end{equation*}
Nevertheless, the analysis of the harmonic oscillator perturbed by the $\delta$ potential, \ie~$b(\phi,\psi)=\phi(0) \overline{\psi(0)} $, reveals that the condition \eqref{trad.ass.B.ref} is not satisfied, \cf~\eqref{hn.expl}--\eqref{hn.bound} in Section \ref{sec.ex}.

Our results are obtained under the assumption
\begin{equation}\label{ass.b}
\exists \alpha >0, \quad \exists M_b >0, \quad \forall m,n \in \N, \quad  |b(\psi_m,\psi_n)| \leq \frac{M_b}{m^{\alpha} n^{\alpha}},  
\end{equation}
see also Remark \ref{rem.4A} at the end of Section \ref{sec.tech.lem}.
This extends the previously considered classes of perturbations. For the harmonic oscillator particularly, it means a step towards singular potentials including the mentioned $\delta$. This paper yields also a partially new version of the proof of the main result in \cite{Adduci-2012-10} for some cases. More precisely, unlike in \cite{Adduci-2012-10, Adduci-2012-73}, where the important technical tool was the Hilbert transform, only the Schur test is used here. The connection to the previous work \cite{Adduci-2012-10} is explained in Section \ref{subsec.V.func}.
Moreover, the result of \cite{Adduci-2012-10} is extended to perturbations by potentials $V \in L(p,\tau)$, $1 \leq p < \infty$, $\tau/4 + t(2p) <0$, \cf~Theorem \ref{thm.V.Lptau} and \eqref{Lptau.def}, \eqref{tp.def} in Section \ref{subsec.V.func}; in particular we obtain the extension to potentials $V \in L^p(\R)$, $1\leq p < 2$.
The overall result on perturbations of the harmonic oscillator is formulated in Corollary \ref{cor.HO} in Section \ref{subsec.sing.V.ho}. Further detailed analysis of the spectrum of a harmonic oscillator perturbed by point interactions can be found in \cite{Mityagin-2014a,Mityagin-2015}.

This paper as well as mentioned previous works aim to find sufficient conditions for the Riesz basisness of the eigensystem. However, the negative results, \ie~the fact that the eigensystem is not a Riesz basis (or even a basis), have been obtained particularly for complex oscillators in \cite{Davies-1999-200, Davies-2000-32, Davies-2004-70, Henry-2012-350, Henry-2014-4, Siegl-2012-86} and just recently in \cite{Mityagin-2013arx}.

The paper is organized as follows. In Section \ref{sec.op.def}, we define the operator $T$ and recall some known facts. The main results on the localization of the spectrum and Riesz basisness of the eigensystem are contained and proven in Section \ref{sec.main.res}. In Section \ref{sec.tech.lem}, we collect technical lemmas used in the proofs of main results. Section \ref{sec.ex} consists of examples and the result for the harmonic oscillator. Conclusions and short discussion are contained in the final Section \ref{sec.concl}.

\section{Definition of the operator and preliminaries}
\label{sec.op.def}

The definition of the operator $T$ is based on the first representation theorem \cite[Thm.VI.2.1]{Kato-1966} that provides the unique correspondence between the $m$-sectorial operator $T$ and the closed sectorial form $t$.
The detailed definition of the operator $T$ can be also found in \cite[Sec.2.]{Agranovich-1994-28}.

The self-adjoint operator $A$ is associated, via the second representation theorem \cite[Thm.VI.2.23]{Kato-1966}, with a quadratic form 
\begin{equation*}
a(\psi,\psi)  := \|A^{\frac 12} \psi\|^2, \ \ 
\Dom(a)  := \Dom(A^{\frac 12}).
\end{equation*}
We consider perturbations by a form $b$ satisfying the condition \eqref{ass.b}.
It follows that $b$ is a form $p$-subordinated perturbation of $a$, \ie~ 
there exist $0 \leq p<1$ and $C>0$ such that  
\begin{equation}\label{form.p.sub}
\forall f \in \Dom(a), \quad |b(f,f)| \leq C \left( a(f,f) \right )^p \|f\|^{2(1-p)},
\end{equation}
see Lemma \ref{lem.sub}. The form $p$-subordination implies the form relative boundedness of $b$ with respect to $a$ with the bound $0$. 
The perturbed operator $T$ is defined as the operator associated, via the first representation theorem \cite[Thm.VI.2.1]{Kato-1966}, with a sectorial form
\begin{equation*}
t:= a + b,  \ \ 
\Dom(t)  = \Dom(a).
\end{equation*}
The domains of $t$ and $a$ are the same due to the $p$-subordination, nevertheless, $\Dom(T)$ and $\Dom(A)$ are typically different. 
The form relative boundedness with the bound 0 together with \cite[Thm.VI.3.4]{Kato-1966} imply that $T$ has a compact resolvent. 

The definition of $T$ can be also reformulated as
\begin{equation*}
T = A^{\frac 12}(I-B(0))A^{\frac 12}.
\end{equation*}
Here $B(z)$, $z \in \C$, is the operator uniquely determined by the bounded form $b((z-A)^{-1/2} \cdot,(\overline{z}-A)^{-1/2} \cdot)$, \ie~$\langle B(z)f,g \rangle = b((z-A)^{-1/2} f,(\overline{z}-A)^{-1/2} g) $ for all $f,g \in \H$. The square root of $z-A$ is defined as 
\begin{equation*}
(z-A)^{-\frac 12} f := \sum_{k \in \N} (z-\mu_k)^{-\frac 12} c_k \psi_k,
\end{equation*}
for $f = \sum_{k \in \N} c_k \psi_k$ and $w^s := |w|^s e^{\ii s \arg w}$, $-\pi < \arg w \leq \pi$, for $0\neq w \in \C$ and $s\in \R$.
For all $z \in \rho(A)$, 
\begin{equation*}
z-T = (z-A)^{\frac 12}(I-B(z))(z-A)^{\frac 12}.
\end{equation*}
This relation yields a suitable representation of the resolvent of $T$, \ie~
\begin{equation}\label{Tz.res.dec}
(z-T)^{-1} = (z-A)^{-\frac 12}(I-B(z))^{-1}(z-A)^{-\frac 12},
\end{equation}
provided $I-B(z)$ is invertible and $z \in \rho(A)$. Formulas of this type are also derived in~\cite[Lem.1.]{Agranovich-1994-28}, \cite[Chap.VI.3.1.]{Kato-1966}.

\begin{remark}\label{rem.sim.as}
We have started with the operator $A$ with eigenvalues satisfying $\mu_{n+1} -\mu_n \geq \Delta$, $\Delta >0$. However, to simplify all formulas, we will assume that $\Delta=1$ and $\mu_1 \geq 1$ in the sequel.
(Without loss of generality this can be always achieved by considering $\Delta^{-1}(A + c I)$ with a suitably chosen $c \in \R_+$.) Eigenvalues $\mu$ then satisfy
\begin{equation}\label{mu.sim}
\mu_k \geq k.
\end{equation}

\end{remark}

\section{Main results}
\label{sec.main.res}

Set 
\begin{equation}\label{Pin.def}
\begin{aligned}
\Pi_0&:=\{ z \in \C: - h < \Re z <  (N+3/2), |\Im z| <  h\} \\
\Pi_k&:=\{ z \in \C: |z-\mu_k| < 1/2 \}, \ \ \Gamma_k:=\partial \Pi_k,\\
\Pi&:=\Pi_0 \cup \Big(\bigcup_{j>N+1}\Pi_j \Big),
\end{aligned}
\end{equation}
where $N \in \N$ and $h>1$ are determined in the following way. The aim is to localize the spectrum of $T$. We will succeed if we guarantee that $\|B(z)\|\leq 1/2$ for $z$ outside of $\Pi$. 

Let $N\equiv N(\alpha)$ be an integer such that
\begin{equation}\label{ass.N}
\forall n>N, \quad M_b \, C(2\alpha) \, \sigma_{2\alpha}(n) \leq  \frac{1}{2}, 
\end{equation}
where $M_b$ is as in \eqref{ass.b} and $C(\alpha),$ $\sigma_{\alpha}(n)$ are introduced in Lemma \ref{lem.sum.est.2} below. $h>1$ is selected such that 
\begin{equation}\label{ass.h}
2 M_b 
\left(
\frac{1}{h}\sum_{k=1}^{N+2} \frac{1}{k^{2\alpha} } 
+
D(2\alpha) \tau_{2\alpha}(h)
\right)
\leq \frac{1}{2},
\end{equation}
where $D(\alpha),$ $\tau_{\alpha}(h)$ are introduced in Lemma \ref{lem.h}.

\begin{proposition}\label{prop.loc}
Let conditions \eqref{ass.A}, \eqref{ass.b} hold and let $N$ and $h$ satisfy the conditions \eqref{ass.N} and \eqref{ass.h}, respectively. Then the eigenvalues of $T$ are contained in the interior of $\Pi$, \cf~\eqref{Pin.def}. Moreover, Riesz projections 
\begin{equation}\label{Pn.SN.def}
\begin{aligned}
S_{N+1}&:=\frac{1}{2\pi \ii} \int_{\Gamma_0}(z-T)^{-1} \dd z, \\
P_j & := \frac{1}{2\pi \ii} \int_{\Gamma_j}(z-T)^{-1} \dd z \ {\rm for} \ \ j>N+1,
\end{aligned}
\end{equation}
are well-defined and 
\begin{equation*}
{\rm Rank} \, S_{N+1} =  N+1, \quad {\rm Rank} \, P_{j} =1 \ {\rm for } \ j>N+1.
\end{equation*}

\end{proposition}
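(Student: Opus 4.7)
The plan is to combine the resolvent decomposition \eqref{Tz.res.dec} with a Neumann series argument: if $\|B(z)\|\le 1/2$ holds on $\partial\Pi$ and throughout $\C\setminus\overline{\Pi}$, then $I+B(z)$ is invertible there, the resolvent $(z-T)^{-1}$ exists via \eqref{Tz.res.dec}, the spectrum of $T$ is confined to the interior of $\Pi$, and the contour integrals in \eqref{Pn.SN.def} are automatically well defined.

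The matrix elements of $B(z)$ in the eigenbasis of $A$ read, for the principal square root,
\[
\langle B(z)\psi_m,\psi_n\rangle \;=\; \frac{b(\psi_m,\psi_n)}{(z-\mu_m)^{1/2}(z-\mu_n)^{1/2}},
\]
so the assumption \eqref{ass.b} yields
\[
|\langle B(z)\psi_m,\psi_n\rangle| \;\le\; \frac{M_b}{m^{\alpha} n^{\alpha}\,|z-\mu_m|^{1/2}\,|z-\mu_n|^{1/2}}.
\]
I would then apply the Schur test to this matrix and estimate row/column sums separately. For $z\in\Gamma_j$ with $j>N+1$, one uses $|z-\mu_j|=1/2$ and $|z-\mu_k|\ge |k-j|/2$ for $k\ne j$ (a consequence of \eqref{mu.sim}); Lemma \ref{lem.sum.est.2} then bounds the row sums by a quantity proportional to $M_b\,C(2\alpha)\,\sigma_{2\alpha}(j)$, and the choice \eqref{ass.N} forces $\|B(z)\|\le 1/2$. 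On $\Gamma_0$ the row sums split into a finite head over indices $k\le N+2$, where $|z-\mu_k|\gtrsim h$, and a tail over $k\ge N+3$ controlled by Lemma \ref{lem.h}; the choice \eqref{ass.h} of $h$ then again delivers $\|B(z)\|\le 1/2$. A completely analogous and easier estimate handles the unbounded complement of $\Pi$.

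For the rank statement I would use a homotopy argument. Let $T_\lambda$ be the $m$-sectorial operator associated with the closed form $a+\lambda b$ for $\lambda\in[0,1]$; since the preceding estimates are linear in $b$, they yield $\|\lambda B(z)\|\le 1/2$ uniformly on $\partial\Pi$ for all $\lambda\in[0,1]$. Hence $(z-T_\lambda)^{-1}$, and with it the Riesz projections $S_{N+1}(\lambda)$ and $P_j(\lambda)$ defined analogously to \eqref{Pn.SN.def}, depend analytically on $\lambda$ along $\partial\Pi$. Their ranks, integer-valued continuous functions of $\lambda\in[0,1]$, are therefore constant. At $\lambda=0$ one has $T_0=A$, so $P_j(0)=\langle\cdot,\psi_j\rangle\psi_j$ has rank $1$ and $S_{N+1}(0)=\sum_{k=1}^{N+1}\langle\cdot,\psi_k\rangle\psi_k$ has rank $N+1$, the latter because $\Pi_0$ contains exactly $\mu_1,\dots,\mu_{N+1}$ by the definition of $\Pi_0$ and \eqref{mu.sim} (which rules out $\mu_{N+2}\in\Pi_0$).

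The main technical obstacle is the Schur estimate on $\Gamma_0$: the horizontal sides $|\Im z|=h$ pass at distance of order $h$ from the $N+1$ lowest eigenvalues of $A$, so the head contribution to the row sums is large and can only be absorbed by taking $h$ large relative to $M_b$ and to $\sum_{k=1}^{N+2}k^{-2\alpha}$. Balancing this head against the tail estimate provided by Lemma \ref{lem.h} is precisely the content of condition \eqref{ass.h}, while \eqref{ass.N} encodes the analogous balance on the small circles $\Gamma_j$.
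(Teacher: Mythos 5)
Your proposal follows the paper's proof in all essentials: the same factorization \eqref{Tz.res.dec} plus Neumann series, the same target bound $\|B(z)\|\le \tfrac12$ on the complement of $\Pi$ reduced to the quantity $M_b\sum_k k^{-2\alpha}|\mu_k-z|^{-1}$, the same three-region case analysis matching \eqref{ass.N} and \eqref{ass.h} via Lemmas \ref{lem.sum.est.2} and \ref{lem.h}, and the same homotopy in the coupling (the paper interpolates $I+tB(z)$, $0\le t\le 1$, and invokes the continuity of the integer-valued trace of the Riesz projection, which is exactly your $T_\lambda$ argument).

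One step as written would fail, though it is easily repaired. You propose to bound $\|B(z)\|$ by the Schur test, "estimating row/column sums separately." The row sum of $|\langle B(z)\psi_m,\psi_n\rangle|$ over $n$ is
\[
\frac{M_b}{m^{\alpha}|z-\mu_m|^{1/2}}\sum_{n}\frac{1}{n^{\alpha}|z-\mu_n|^{1/2}},
\]
and for $z\in\Gamma_j$ the inner sum behaves like $\sum_n n^{-\alpha}|n-j|^{-1/2}$, which \emph{diverges} whenever $\alpha\le 1/2$ — in particular for the motivating $\delta$-potential example, where $\alpha=1/4$. So the unweighted Schur test does not produce the bound $M_b\,C(2\alpha)\,\sigma_{2\alpha}(j)$ you assert. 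The fix is to exploit the product structure of the matrix bound $|\langle B(z)\psi_m,\psi_n\rangle|\le u_m u_n$ with $u_m=M_b^{1/2}m^{-\alpha}|z-\mu_m|^{-1/2}$: either run the Schur test with weights $p_m=u_m$, or simply apply Cauchy--Schwarz twice (which is what the paper does in \eqref{loc.est.1}), to get $\|B(z)\|\le\|u\|^2=M_b\sum_k k^{-2\alpha}|\mu_k-z|^{-1}$. From there your argument, and the paper's, proceed identically. (Note also that the Schur test \emph{is} legitimately used with unit weights later in the paper, in the proof of Theorem \ref{thm.RB}, but there the matrix $\mathcal{M}$ carries a full power of $|\mu_j-z_n^*|$, not a square root, so its row and column sums do converge.)
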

\begin{proof}
At first we show that $(z-T)^{-1}$ is bounded for every $z \notin \Pi_0 \cup_{j>N+1}\Pi_j$. Using the resolvent factorization \eqref{Tz.res.dec}, it suffices to prove that $\|B(z)\| \leq 1/2$. Let $f = \sum_{j=1}^{\infty} f_j \psi_j \in \H$, then

\begin{equation}\label{loc.est.1}
\begin{aligned}
\|B(z)f\|^2  &= 
\sum_{k=1}^{\infty} |\langle B(z) f, \psi_k \rangle|^2 
= 
\sum_{k=1}^{\infty} 
\left| 
\sum_{j=1}^{\infty} \frac{f_j b(\psi_j,\psi_k) }{(z-\mu_j)^{\frac{1}{2}} (z-\mu_k)^{\frac{1}{2}}}
\right|^2 
\\
&\leq 
M_b^2 \sum_{k=1}^{\infty} 
\frac{1}{k^{2\alpha}|\mu_k-z|}
\left(
\sum_{j=1}^{\infty} \frac{|f_j|}{ j^{\alpha}|\mu_j-z|^{\frac{1}{2}}}
\right)^2 
\\
&
\leq 
M_b^2 
\left(
\sum_{k=1}^{\infty} 
\frac{1}{k^{2\alpha}|\mu_k -z|}
\right)^2
\|f\|^2.
\end{aligned}
\end{equation}

Let $\Re z \in [\mu_n-1/2,\mu_{n+1}-1/2]$ and $z \notin \Pi_n$, $n \geq N$. We apply inequalities from Lemma \ref{lem.sum.est.2} and we obtain
\begin{equation}\label{loc.est.2}
\|B(z)\| 
\leq
M_b \, C(2\alpha) \, \sigma_{2\alpha} (n)
\leq \frac{1}{2},
\end{equation}
for $n >N$, where $N$ is chosen above, \cf~\eqref{ass.N}. 

The next step is estimate outside of $\Pi_0$.
If $\Re z = - h$, then
\begin{equation*}
 \|B(z)\|
\leq 
M_b 
\sum_{k=1}^{\infty} \frac{1}{ k^{2\alpha}(k+h) }
\leq 
M_b \, D(2\alpha) \, \tau_{2\alpha}(h)
\leq
\frac{1}{2},
\end{equation*}
for $h$ selected as above, \cf~\eqref{ass.h}.

If $\Re z \in [ - h, (N+3/2)]$, then $|\Im z| \geq h$. We denote $z= s+\ii h$, then
\begin{equation*}
\begin{aligned}
\|B(z)\| 
&\leq 
M_b 
\sum_{k=1}^{\infty} \frac{1}{k^{2\alpha} \sqrt{(\mu_k-s)^2 +h^2 }}
\leq 
2  M_b
\sum_{k=1}^{\infty} \frac{1}{k^{2\alpha} (|\mu_k-s| +h) }
\\
&
\leq 
2  M_b
\left(
\sum_{k=1}^{N+2} \frac{1}{h k^{2\alpha}} +
\sum_{k=N+3}^{\infty} \frac{1}{k^{2\alpha} (k- N-2 +h) }
\right)
\\
&
\leq 
2  M_b
\left(
\frac{1}{h} \sum_{k=1}^{N+2} \frac{1}{k^{2\alpha}} +
D(2\alpha) \tau_{2\alpha}(h)
\right)
\leq
\frac{1}{2}
,
\end{aligned}
\end{equation*}
where we use \eqref{mu.sim}, \eqref{ass.h}, and inequalities 
$$(a+b)/2 \leq \sqrt{a^2 + b^2}, \qquad -h \leq s  \leq N+3/2.$$

The standard argument, based on \cite[Lem.VII.6.7]{DS1}, shows that 
\begin{equation*}
{\rm Trace} \, \frac{1}{2\pi \ii} \int_{\Gamma_n} (z-A)^{-\frac 12}(I-t B(z))^{-1} (z-A)^{-\frac 12} \dd z, \ 0 \leq t \leq 1,   
\end{equation*}
is a continuous integer valued function. Therefore it is constant and the second part of the claim follows.
\end{proof}

The obvious corollary is that the eigenvalues $\lambda_n$ of $T$ become eventually simple (for $n > N+1$) and localized around those of the unperturbed operator $A$, while the first part of the spectrum is localized in $\Pi_0$; it is important that there is only a finite number of eigenvalues in $\Pi_0$. The latter also means that the eigensystem of $T$ contains at most a finite number of root vectors associated with different eigenvalues $\{\lambda_n\}_{n=1}^{N_0}$, $N_0 \leq N+1$, in $\Pi_0$ with algebraic multiplicities $\{m_n\}_{n=1}^{N_0}$, $\sum_{n=1}^{N_0} m_n = N+1$, and the rest, $\{\lambda_n\}_{n>N+1}$, consists of eigenvectors $\{\phi_n\}$ related to eigenvalues in $\cup_{j>N}\Pi_j$ of both algebraic and geometric multiplicity one. 

\begin{remark}
Proposition \ref{prop.loc} serves to define the Riesz projections $S_{N+1}$ and $P_j$ that are further analyzed in Theorem \ref{thm.RB}. If we wish to localize the eigenvalues of $T$ more precisely, we can modify $\Pi_k$, $k>N$, to be circles with radii $r_k \to 0$ instead of $1/2$. Then the straightforward modification of estimates \eqref{loc.est.1}, \eqref{loc.est.2}, and Lemma \ref{lem.sum.est.2} yields that $r_k$ does decay as $\mathcal O (k^{- 2 \alpha})$.

A detailed eigenvalue asymptotics of a harmonic oscillator perturbed by point interactions, see Section 5 below, can be found in \cite{Mityagin-2014a,Mityagin-2015}.
\end{remark}

To formulate the main result we denote $\{P_n^0\}$ the one dimensional spectral projections of $A$ related to eigenvalues $\{\mu_n\}$, \ie~ 
\begin{equation*}\label{sp.proj.A}
P_n^0  := \frac{1}{2\pi \ii} \int_{|z-\mu_n|=\frac \Delta 2}(z-A)^{-1} \dd z, \quad n \in \N.
\end{equation*}

\begin{theorem}\label{thm.RB}
Let conditions \eqref{ass.A} and \eqref{ass.b} hold. 
Then there exists a bounded operator $W$ with bounded inverse such that projectors $\{P_n\}$ and $S_{N+1}$, \cf~\eqref{Pn.SN.def}, satisfy 
$$
W P_n W^{-1} = P_n^0
$$
for all $n > N+1$ and 
$$
W S_{N+1} W^{-1} = \sum_{n=1}^{N+1} P_n^0.
$$
Hence, $\{S_{N+1},P_{N+2},P_{N+3},\dots\}$ is a Riesz system of projectors.
\end{theorem}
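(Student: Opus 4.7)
The strategy is to verify the hypotheses of Kato's Criterion~\ref{lem.Kato} applied to the orthogonal projections $P_0 := \sum_{n=1}^{N+1} P_n^0$, $P_j := P_{N+1+j}^0$ for $j \geq 1$, together with the Riesz projections $Q_0 := S_{N+1}$, $Q_j := P_{N+1+j}$ for $j \geq 1$ furnished by Proposition~\ref{prop.loc}. The rank condition ${\rm Rank}\, P_0 = {\rm Rank}\, Q_0 = N+1$ is immediate from Proposition~\ref{prop.loc}, and $Q_j Q_k = \delta_{jk} Q_j$ is the standard mutual orthogonality of Riesz projections attached to pairwise disjoint components of $\sigma(T)$. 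Everything therefore reduces to the Kato estimate
\begin{equation*}
\sum_{j=N+2}^{\infty} \|P_j^0 (P_j - P_j^0) u\|^2 \leq c_0 \|u\|^2, \qquad c_0 < 1.
\end{equation*}

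To analyse the summand I would start from the identity
\begin{equation*}
(z-T)^{-1} - (z-A)^{-1} = -(z-A)^{-1/2} B(z)(I+B(z))^{-1}(z-A)^{-1/2},
\end{equation*}
a direct consequence of \eqref{Tz.res.dec}, and integrate over $\Gamma_j$. Since $P_j^0 = \langle\cdot,\psi_j\rangle\psi_j$ commutes with $(z-A)^{-1/2}$, left-multiplication by $P_j^0$ collapses the operator-valued integral to
\begin{equation*}
P_j^0 (P_j - P_j^0) u = -\psi_j \cdot \frac{1}{2\pi\ii}\int_{\Gamma_j} (z-\mu_j)^{-1/2}\,\langle B(z)(I+B(z))^{-1}(z-A)^{-1/2}u,\psi_j\rangle\,\dd z.
\end{equation*}
On $\Gamma_j$ Proposition~\ref{prop.loc} furnishes $|z-\mu_j|^{-1/2}=\sqrt{2}$ and $\|(I+B(z))^{-1}\|\leq 2$, so matters reduce to pointwise control of the bilinear expression in the integrand.

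Using the matrix-element bound
\begin{equation*}
|\langle B(z)\psi_k,\psi_j\rangle| \leq \frac{M_b}{k^\alpha j^\alpha |z-\mu_j|^{1/2}|z-\mu_k|^{1/2}}
\end{equation*}
from \eqref{ass.b}, a Cauchy--Schwarz application in the summation index $k$ together with Lemma~\ref{lem.sum.est.2} should give a pointwise estimate of the shape $\|P_j^0(P_j-P_j^0)u\|^2 \leq C\,\sigma_{2\alpha}(j)\, j^{-2\alpha}\sum_k |\langle u,\psi_k\rangle|^2(1+|\mu_k-\mu_j|)^{-1}$. Summing in $j > N+1$ via a Schur test with symmetric kernel $(1+|\mu_k-\mu_j|)^{-1}$ would then produce the total bound by $c_0\|u\|^2$ with $c_0 \to 0$ as $N \to \infty$.

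The main obstacle I anticipate is precisely the control of $c_0$. First, the Neumann expansion $(I+B(z))^{-1}=\sum_{m\geq 0}(-B(z))^m$ must be handled so that every iterated product still respects the $(jk)^{-\alpha}$ decay from \eqref{ass.b}, with $\|B(z)\|\leq 1/2$ supplying the geometric damping needed to sum the higher-order terms; the first-order contribution should already be dominant. Second, attaining $c_0<1$ may require replacing $N$ by a larger integer, which is harmless since it only enlarges $\Pi_0$ and the finite-rank projection $S_{N+1}$ without affecting the conclusion of the theorem. Once this is in place, Criterion~\ref{lem.Kato} produces the similarity $W$ and the stated Riesz system of projectors follows at once.
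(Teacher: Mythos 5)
Your proposal is correct and shares the paper's overall architecture --- Kato's Criterion \ref{lem.Kato} with $P_0=\sum_{j<N_*}P_j^0$, $Q_0=S_{N_*-1}$ for a suitably enlarged cutoff $N_*$, the factorization \eqref{Tz.res.dec} giving $(z-T)^{-1}-(z-A)^{-1}=-(z-A)^{-1/2}(I+B(z))^{-1}B(z)(z-A)^{-1/2}$, the bounds $\|(I+B(z))^{-1}\|\le 2$ on $\Gamma_j$, and Lemma \ref{lem.sum.est.2} --- but your execution of the decisive square-summability estimate is genuinely different. The paper does not use the rank-one structure of $P_n^0$: it bounds $\|P_n^0(P_n-P_n^0)f\|$ through $\|B(z)(z-A)^{-1/2}f\|$, arrives at $\sigma_{2\alpha}(n)\bigl(\sum_j|f_j|\,j^{-\alpha}|\mu_j-z_n^*|^{-1}\bigr)^2$, and then must introduce an auxiliary exponent $\omega\in(0,\alpha)$ and run a Schur test on the non-symmetric matrix $\mathcal{M}_{nj}=n^{-\omega}j^{-\alpha}|\mu_j-z_n^*|^{-1}$. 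You instead collapse $P_j^0(\cdots)$ to the scalar $\langle\cdot,\psi_j\rangle$, which attaches the extra factor $j^{-\alpha}|z-\mu_j|^{-1/2}$ to the output index; after Cauchy--Schwarz in $k$ this yields the symmetric-kernel bound $C\,\sigma_{2\alpha}(j)\,j^{-2\alpha}\sum_k|u_k|^2(1+|\mu_k-\mu_j|)^{-1}$, and the sum over $j\ge N_*$ closes by Tonelli plus one more application of Lemma \ref{lem.sum.est.2}, giving a total of order $\sigma_{2\alpha}(N_*)\to 0$. This dispenses with $\omega$ and the asymmetric Schur matrix at no cost, since $P_j^0$ really is rank one for all relevant $j$. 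One remark on the obstacle you flag: the Neumann expansion of $(I+B(z))^{-1}$ never needs to be tracked at the level of matrix elements. Writing $\langle B(z)(I+B(z))^{-1}v,\psi_j\rangle=\langle(I+B(z))^{-1}v,B(z)^*\psi_j\rangle$ and using $\|B(z)^*\psi_j\|^2\le M_b^2\,j^{-2\alpha}|z-\mu_j|^{-1}\,C(2\alpha)\sigma_{2\alpha}(j)$ together with the operator-norm bound $\|(I+B(z))^{-1}\|\le 2$ already delivers all the decay you need; the first-order term is not merely dominant, it is the only one you ever have to expand. Your closing remark about enlarging $N$ is exactly the paper's device of choosing $N_*>N+2$ so that the Kato constant drops below $1$.
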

\begin{remark}
Projectors $P_n$ are one-dimensional for $n>N+1$ and $S_{N+1}$ has rank $N+1$. Therefore the system of root vectors of $T$ contains a Riesz basis $\{f_n\}_{n=1}^{\infty}$ with $f_n = \phi_n$ for $n>N+1$.
\end{remark}

\begin{proof}
The proof is based on Criterion \ref{lem.Kato}. The spectral projections $P_n^0$ of $A$ form a complete family of orthogonal projections, since $A$ is self-adjoint with discrete spectrum. 

In order to apply Criterion \ref{lem.Kato}, we have to find $N_* > N+2$, $N_* \in \N$, such that
\begin{equation*}
\forall f \in \H, \ \ \sum_{n \geq N_*}^{\infty} \|P_n^0(P_n-P_n^0)f\|^2 \leq \frac{1}{2} \|f\|^2,
\end{equation*}
so we take (in the notation of Criterion \ref{lem.Kato}) $P_0:= \sum_{j=1}^{N_*-1} P_j^0$ and $Q_0= S_{N_*-1}$. The latter has the same rank as $P_0$, \cf~Proposition \ref{prop.loc}.

If $n > N + 2$ and $z\in \Gamma_n$, then we have
\begin{equation}
\label{Pn.est.1}
\begin{aligned}
&\|P_n^0(P_n-P_n^0)f\|^2 \leq \frac{1}{4\pi^2} \left\|\int_{\Gamma_n} ((z-T)^{-1} - (z-A)^{-1} )f \dd z \right\|^2 
\\
& \leq 
\frac{1}{4\pi^2}
\left( \int_{\Gamma_n} \|(z-A)^{-\frac{1}{2}} \| \|(I-B(z))^{-1}\| \|B(z)(z-A)^{-\frac{1}{2}} f\| |\dd z | \right)^2
\\
& \leq \frac{2}{\pi^2} \left( \int_{\Gamma_n} \|B(z)(z-A)^{-\frac{1}{2}} f\| |\dd z| \right)^2,
\end{aligned}
\end{equation}
where we use the factorization of resolvent \eqref{Tz.res.dec} and the bound $\|B(z)\| \leq 1/2$ for $z \in \Gamma_n$, $n>N$, \cf~the proof of Proposition \ref{prop.loc}.
Decomposing $f = \sum_{j=1}^{\infty} f_j \psi_j \in \H$ we obtain
\begin{equation*}
\begin{aligned}
 \|B(z)(z-A)^{-\frac{1}{2}} f\|^2 &= 
\sum_{k=1}^{\infty} 
\left| 
\left\langle 
B(z) (z-A)^{-\frac{1}{2}} 
\sum_{j=1}^{\infty}f_j \psi_j,
\psi_k
\right\rangle
\right|^2
\\
& \leq 
\sum_{k=1}^{\infty} \frac{1}{|\mu_k-z|} 
\left| 
\sum_{j=1}^{\infty} \frac{f_j}{z-\mu_j} b(\psi_j,\psi_k) \right|^2
\\
&
\leq  M_b^2
\sum_{k=1}^{\infty} \frac{1}{k^{2\alpha}|\mu_k-z|} 
\left(
\sum_{j=1}^{\infty} \frac{|f_j|}{j^{\alpha}|\mu_j-z|}  
\right)^2.
\end{aligned}
\end{equation*}
For $n >N +2$, we select $z_n^* \in \Gamma_n$ for which the maximum of the integrand in the last integral in \eqref{Pn.est.1} is attained; notice that $\{z_n^*\}$ depends on $f$. Then we can continue estimates in \eqref{Pn.est.1},
\begin{equation}\label{Pn.est.2}
\begin{aligned}
\|P_n^0(P_n-P_n^0)f\|^2 
\leq 
2 M_b^2  
\sum_{k=1}^{\infty} \frac{1}{k^{2\alpha}|\mu_k-z_n^*|} 
\left( 
\sum_{j=1}^{\infty} \frac{|f_j|}{j^{\alpha}|\mu_j-z_n^*|}  
\right)^2.
\end{aligned}
\end{equation}
We apply Lemma \ref{lem.sum.est.2} on the first sum in \eqref{Pn.est.2},
\begin{equation*}
\begin{aligned}
\|P_n^0(P_n-P_n^0)f\|^2 
\leq 
2 M_b^2 C(2\alpha) \sigma_{2\alpha}(n)
\left( 
\sum_{j=1}^{\infty} \frac{|f_j|}{j^{\alpha}|\mu_j-z_n^*|}  
\right)^2.
\end{aligned}
\end{equation*}

The final step is estimating the sum of $\|P_n^0(P_n-P_n^0)f\|^2$, starting at some $N_1 > N+2$, $N_1 \in \N$. We fix $\omega$, $0< \omega <\min\{\alpha,1/2\}$, and assume that $N_1$ is so large that $\sigma_{\alpha}(n) \leq \sigma_{\alpha}(N_1)$ and $\sigma_{2\alpha}(n)n^{2\omega} \leq \sigma_{2\alpha}(N_1)N_1^{2\omega}$ for $n \geq N_1$.
Then, leaving aside the constant,
\begin{equation*}
\begin{aligned}
&\sum_{n=N_1}^{\infty} 
\sigma_{2\alpha}(n)
\left( 
\sum_{j=1}^{\infty} \frac{|f_j|}{j^{\alpha}|\mu_j-z_n^*|} 
\right)^2
\leq  
\sigma_{2\alpha}(N_1) N_1^{2\omega} 
\sum_{n= N_1}^{\infty} 
\left( 
\sum_{j=1}^{\infty} \frac{|f_j|}{n^{\omega}j^{\alpha}|\mu_j-z_n^*|} 
\right)^2
\\
&
=
\sigma_{2\alpha}(N_1) N_1^{2\omega} 
\|\mathcal{M} \tilde{f}\|^2_{\ell^2(\N)}
,
\end{aligned}
\end{equation*}
where $\mathcal{M}$ is an operator acting in $\ell^2(\N)$ with matrix elements 
\begin{equation*}
\begin{aligned}
\mathcal{M}_{nj}&= 0, & n < N_1, \\
\mathcal{M}_{nj}&=
\frac{1}{n^{\omega}j^{\alpha}|\mu_j-z_n^*|}, & n \geq N_1.
\end{aligned}
\end{equation*}
and $\tilde{f}=\{|f_n|\}_{n\in\N} \in \ell^2(\N)$.

We intend to bound $\|\mathcal{M}\|$ using the Schur test, \cf~\cite{Schur-1911-140} or \cite[Sec. 3]{Dym-2003-210}, \cite[Thm. 5.2]{Halmos-1978-96}. To this end we estimate the following sums by applying Lemma \ref{lem.sum.est.2}
\begin{equation*}
\begin{aligned}
& \sum_{j=1}^{\infty}|\mathcal{M}_{nj}| 
=
\sum_{j=1}^{\infty} \frac{1}{n^{\omega}j^{\alpha}|\mu_j-z_n^*|}
\leq 
\frac{1}{n^{\omega}} C(\alpha) \sigma_{\alpha}(n)
\leq 
C(\alpha) \frac{\sigma_{\alpha}(N_1)}{N_1^{\omega}},
\\
&\sum_{n=1}^{\infty}|\mathcal{M}_{nj}| 
\leq
\frac{1}{j^{\alpha}}
C(\omega)
\sigma_{\omega}(j)
\leq
\frac{C(\omega)}{\omega e}
,
\end{aligned}
\end{equation*}
where we use \eqref{log.est} and that $N_1$ is such that $\sigma_{\alpha}(n) \leq \sigma_{\alpha}(N_1)$. 
The Schur test then yields
\begin{equation*}
\|\mathcal{M}\|^2
\leq 
 \frac{C(\alpha) C(\omega)}{\omega e}   
\frac{\sigma_{\alpha}(N_1)}{N_1^{\omega}}.
\end{equation*}
Therefore, since $\omega < \alpha$, 
\begin{equation*}
\begin{aligned}
\sum_{n\geq N_1}^{\infty} \|P_n^0(P_n-P_n^0)f\|^2 
\leq  
\frac{2 M_b^2 C(2\alpha) C(\alpha) C(\omega)}{\omega e}   
\sigma_{2\alpha}(N_1) \sigma_{\alpha}(N_1) N_1^{\omega} 
\|f\|^2,
\end{aligned}
\end{equation*}
which proves the existence of sought $N_*$ and Theorem \ref{thm.RB}.
\end{proof}

\section{Technical lemmas}
\label{sec.tech.lem}

We collect technical results, mainly estimates on the sums appearing in the proofs of main results. At first we explain in details the form subordination of the perturbation.

\begin{lemma}\label{lem.sub}
Let $A$ and $b$ satisfy \eqref{ass.A} and \eqref{ass.b}.
Then there exist $0 \leq p <1$ and $C>0$ such that  
\begin{equation*}
\forall f \in \Dom(a), \ |b(f,f)| \leq C \left( a(f,f) \right)^p \|f\|^{2(1-p)},
\end{equation*}
\ie~$b$ is $p$-subordinated to $a$. Moreover, for $\alpha \leq 1/2$, $p$ can be selected as $1-2\alpha + \tau$, with $\tau >0$ arbitrarily small. If $\alpha>1/2$, then $b$ is bounded.
\end{lemma}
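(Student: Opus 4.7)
The plan is to expand $f$ in the eigenbasis of $A$ and reduce everything to a weighted $\ell^2$ estimate. Writing $f = \sum_{n\geq 1} c_n \psi_n$, the assumption \eqref{ass.b} gives
\begin{equation*}
|b(f,f)| \leq \sum_{m,n \geq 1} |c_m|\,|c_n|\,|b(\psi_m,\psi_n)|
\leq M_b \left(\sum_{n \geq 1} \frac{|c_n|}{n^\alpha}\right)^{2},
\end{equation*}
so the whole question is to control the single sum $\sum_n |c_n| n^{-\alpha}$ by a combination of $\|f\|$ and $a(f,f)^{1/2}$. Recall that by \eqref{mu.sim}, $a(f,f) = \sum_n \mu_n |c_n|^2 \geq \sum_n n |c_n|^2$.

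For the subordinated regime, I would apply Cauchy--Schwarz with a free parameter $s \geq 0$:
\begin{equation*}
\left(\sum_{n\geq 1} \frac{|c_n|}{n^\alpha}\right)^{2}
= \left(\sum_{n\geq 1} n^{s} |c_n| \cdot \frac{1}{n^{s+\alpha}}\right)^{2}
\leq \left(\sum_{n\geq 1} n^{2s} |c_n|^2 \right) \left(\sum_{n\geq 1} \frac{1}{n^{2s+2\alpha}}\right).
\end{equation*}
The last sum converges as soon as $2s + 2\alpha > 1$. For the first sum, provided $0 \leq 2s \leq 1$, Hölder's inequality with conjugate exponents $1/(2s)$ and $1/(1-2s)$ (applied to the factorization $n^{2s}|c_n|^2 = (n|c_n|^2)^{2s}(|c_n|^2)^{1-2s}$) yields
\begin{equation*}
\sum_{n\geq 1} n^{2s} |c_n|^2
\leq \left(\sum_{n\geq 1} n |c_n|^2\right)^{2s}\!\!\left(\sum_{n\geq 1} |c_n|^2\right)^{1-2s}
\leq a(f,f)^{2s}\, \|f\|^{2(1-2s)}.
\end{equation*}
Setting $p := 2s$ and combining everything gives $|b(f,f)| \leq C\, a(f,f)^p \|f\|^{2(1-p)}$ with $C = M_b \sum n^{-2s-2\alpha}$.

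It only remains to choose $p$. For $\alpha \leq 1/2$ the constraint $2s + 2\alpha > 1$ forces $p > 1 - 2\alpha$, so I pick $p = 1 - 2\alpha + \tau$ with any $\tau \in (0, 2\alpha)$; this also keeps $p < 1$, as required by the definition of form subordination. For $\alpha > 1/2$, the constraint $2s + 2\alpha > 1$ is satisfied already at $s = 0$, so the same Cauchy--Schwarz bound (without the Hölder interpolation step) yields
\begin{equation*}
|b(f,f)| \leq M_b \left(\sum_{n \geq 1} \frac{1}{n^{2\alpha}}\right) \|f\|^2,
\end{equation*}
i.e.\ $b$ extends to a bounded form on $\H$. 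I do not expect a real obstacle here; the only thing to be careful about is recording the admissible ranges of $s$ (and hence of $\tau$) and checking that $\sum n^{-2s-2\alpha}$ really converges with the chosen exponent, which is immediate once $p > 1-2\alpha$ is fixed.
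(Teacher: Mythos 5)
Your argument is correct and is essentially the paper's own proof: both expand $f$ in the eigenbasis, apply Cauchy--Schwarz with a free weight $n^{s}$ (the paper's $\beta = s$), and then interpolate $\sum_n n^{2s}|c_n|^2$ between $a(f,f)$ and $\|f\|^2$ by H\"older, arriving at the same exponent $p = 1-2\alpha+\tau$ and the same boundedness conclusion for $\alpha > 1/2$. The only cosmetic difference is that the paper routes the middle step through $\|A^{\beta}f\|^2$ before invoking $\mu_n \geq n$, whereas you use $\sum_n n|c_n|^2 \leq a(f,f)$ directly.
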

\begin{proof}
Writing $f= \sum_{j=1}^{\infty} f_j \psi_j$, we get ($0\leq\beta < 1/2$)
\begin{equation*}
\begin{aligned}
|b(f,f)| & = 
\left| 
\sum_{j,k=1}^{\infty} \overline{f}_j f_k b(\psi_j,\psi_k)  \right| 
\leq 
M_b 
\left(
\sum_{j=1}^{\infty} \frac{|f_j|}{j^{\alpha}} 
\right)^2
= 
M_b 
\left(
\sum_{j=1}^{\infty} |f_j|j^{\beta} \frac{1}{j^{\alpha+\beta}}  
\right)^2
\\ 
& 
\leq  
M_b \|A^{\beta} f\|^2 
\sum_{j=1}^{\infty} \frac{1}{j^{2(\alpha+\beta)}}
\leq 
M_b (a(f,f))^{2\beta} \|f\|^{2(1-2\beta)} 
\sum_{j=1}^{\infty} \frac{1}{j^{2(\alpha+\beta)}}, 
\end{aligned}
\end{equation*}
where we used the H\"older inequality in the last step.
For $\alpha \leq 1/2$, we select $\beta=1/2-\alpha + \tau/2$, $\tau >0$ and we receive the claim with $p=1-2\alpha + \tau$. For $\alpha > 1/2$, we can take $\beta=0$ and therefore $b$ is bounded.
\end{proof}

\begin{lemma}\label{lem.sum.est.2}
Let $n \in \N$, $n>1$, $ \Re z_n \in [\mu_n-1/2,\mu_{n+1}-1/2]$, and $z_n \notin \Pi_n$, where $\Pi_n$ is defined in \eqref{Pin.def} and $\gamma >0$. Then
\begin{equation*}
\sum_{k =1}^{\infty} \frac{1}{k^{\gamma}|\mu_k-z_n| } 
\leq
C(\gamma)
\sigma_{\gamma}(n),
\end{equation*}
where $C(\gamma)$ does not depend on $n$ and
\begin{equation*}
\sigma_{\gamma}(n):=
\begin{cases}
n^{-\gamma} \log n, & \gamma \leq 1, 
\\
n^{-1},  & \gamma > 1.
\end{cases}
\end{equation*}

\end{lemma}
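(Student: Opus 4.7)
The plan is to reduce $|\mu_k - z_n|$ to a purely combinatorial quantity using the hypothesis on $z_n$ together with the normalization $\mu_{k+1}-\mu_k \geq 1$ from Remark~\ref{rem.sim.as}. Since $\Re z_n \in [\mu_n - 1/2, \mu_n + 1/2]$ while $|\mu_k - \mu_n| \geq |k-n|$, for every $k \neq n$ one has $|\mu_k - z_n| \geq |\mu_k - \Re z_n| \geq |k-n| - 1/2 \geq |k-n|/2$. For $k=n$, the assumption $z_n \notin \Pi_n$ yields $|\mu_n - z_n| \geq 1/2$. Hence, up to a constant, the sum to be estimated is bounded by $n^{-\gamma} + \sum_{k \neq n} k^{-\gamma} |k-n|^{-1}$.

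Next I would split the remaining sum into the three regions $1 \leq k \leq n/2$, $n/2 < k \leq 2n$ (with $k \neq n$), and $k > 2n$, on each of which one of the two factors in the denominator is comparable to a power of $n$. On the tail region $k > 2n$ we have $|k-n| \geq k/2$, so the contribution is bounded by $\sum_{k > 2n} k^{-\gamma - 1} \lesssim n^{-\gamma}$. On the middle region $k^\gamma$ is comparable to $n^\gamma$, so after factoring it out we are left with a harmonic-type tail $\sum_{0 < |k-n| \leq n} |k-n|^{-1}$, contributing $O(n^{-\gamma} \log n)$. On the left region we use $n - k \geq n/2$ to reduce to $n^{-1} \sum_{k \leq n/2} k^{-\gamma}$, which behaves like $n^{-\gamma}\log n$ for $\gamma = 1$, like $n^{-\gamma}$ for $\gamma < 1$, and like $n^{-1}$ for $\gamma > 1$.

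Collecting the three contributions, the dominant term is $n^{-\gamma} \log n$ for $\gamma \leq 1$ and $n^{-1}$ for $\gamma > 1$, which matches the definition of $\sigma_\gamma(n)$; the implicit constant depends only on $\gamma$, so it can be absorbed into $C(\gamma)$. The mildly delicate point is the bookkeeping in the left tail: its behavior is precisely what forces the dichotomy at $\gamma=1$ in the definition of $\sigma_\gamma$, while the middle-region contribution is what introduces the logarithmic factor when $\gamma \leq 1$. Everything else is routine estimation of $p$-series.
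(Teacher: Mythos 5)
Your argument is correct and follows essentially the same route as the paper: the same reduction $|\mu_k-z_n|\geq |k-n|/2$ for $k\neq n$ (and $\geq 1/2$ for $k=n$), the same splitting of the remaining sum at roughly $n/2$ and $2n$, and the same absorption of $n^{-\gamma}\log n$ into $n^{-1}$ when $\gamma>1$. The only cosmetic difference is that you estimate the partial sums directly, whereas the paper compares them with integrals of the convex function $x^{-\gamma}(n-x)^{-1}$; the resulting bounds are identical.
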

\begin{proof}
Using $|\mu_k-z_n|\geq |k-n|/2$ for $k \notin \{n,n+1\}$, $|\mu_n-z_n| \geq 1/2$, and $|\mu_{n+1}-z_n| \geq 1/2$ , we obtain
\begin{equation}
\label{eq.lem.1}
\sum_{k=1}^{\infty} \frac{1}{k^{\gamma}|\mu_k-z_n| } \leq 
2
\left( \sum_{k=1}^{n-1} \frac{1}{k^{\gamma}(n-k) } +
\frac{2}{n^{\gamma}} 
+ \sum_{k=n+2}^{\infty} \frac{1}{k^{\gamma}(k-n) }
\right)
.
\end{equation}
If $f$ is a convex non-negative function in interval $[1,p]$, then
\begin{equation*}
\int_1^p f(x) \dd x \leq \sum_{i=1}^p f(i) \leq f(1) + f(p) + \int_1^p f(x) \dd x.
\end{equation*}
Therefore the first term on the right in \eqref{eq.lem.1} can be estimated as
\begin{equation*}
\begin{aligned}
&\sum_{k=1}^{n-1} \frac{1}{k^{\gamma}(n-k) } 
\leq 
\frac{1}{n-1} + \frac{1}{(n-1)^{\gamma}} + \int_{1}^{n-1} \frac{\dd x}{x^{\gamma} (n-x) }.
\end{aligned}
\end{equation*}
Splitting the integral we obtain
\begin{equation*}
\begin{aligned}
\int_{\frac n2}^{n-1} \frac{\dd x}{x^{\gamma} (n-x) }
&\leq 
\frac{2^{\gamma}}{n^{\gamma}} \int_{\frac n2}^{n-1} \frac{\dd x}{n-x}
\leq
2^{\gamma} \, \frac{\log n}{n^{\gamma}},
\\
\int_1^{\frac n2} \frac{\dd x}{x^{\gamma} (n-x) }
&\leq 
\frac{2}{n} \int_1^{\frac n2} \frac{\dd x}{x^{\gamma}},
\end{aligned}
\end{equation*}
where depending on $\gamma$
\begin{equation*}
\frac{2}{n} \int_1^{\frac n2} \frac{\dd x}{x^{\gamma}} \leq 
\begin{cases}
2^{\gamma}(1-\gamma)^{-1} n^{-\gamma}, & \gamma <1, \\
2 n^{-1} \log n, & \gamma =1, \\
2 (\gamma-1)^{-1} n^{-1}, & \gamma >1.
\end{cases}
\end{equation*}
The third term in \eqref{eq.lem.1} is split as well
\begin{equation*}
\sum_{k=n+2}^{\infty} \frac{1}{k^{\gamma}(k-n) } = 
\sum_{k=n+2}^{2n} \frac{1}{k^{\gamma}(k-n) } + \sum_{k=2n+1}^{\infty} \frac{1}{k^{\gamma}(k-n) }
\end{equation*}
and estimated as
\begin{equation*}
\begin{aligned}
&\sum_{k=n+2}^{2n} \frac{1}{k^{\gamma}(k-n) } 
\leq 
\frac{1}{n^{\gamma}} \sum_{k=n+2}^{2n} \frac{1}{k-n } 
\leq 
\frac{1}{n^{\gamma}} \int_{n+1}^{2n}  \frac{\dd x}{x-n} 
\leq  \frac{\log n}{n^{\gamma}}, 
\\
&\sum_{k=2n+1}^{\infty} \frac{1}{k^{\gamma}(k-n)} 
 \leq
2\sum_{k=2n+1}^{\infty} \frac{1}{k^{\gamma+1}} 
\leq  
2 \int_{2n}^{\infty}  \frac{\dd x}{x^{\gamma +1}}
\leq 
\frac{2^{1-\gamma}}{\gamma}\frac{1}{n^{\gamma}}
,
\end{aligned}
\end{equation*}
where we used $k-n >k/2$ in the second estimate.

Combing all the inequalities and using for $\gamma = 1+\beta >1$ that 
\begin{equation}\label{log.est}
\frac{\log n}{n^{\beta}} \leq \max_{x \geq 0} \frac{x} {e^{\beta x}} = \frac{1}{\beta e}, 
\end{equation}
we obtain the claim.
\end{proof}

\begin{lemma}\label{lem.h}
Let $h>1$. Then
\begin{equation*}
\sum_{k=1}^{\infty} \frac{1}{ k^{\gamma}(k+h) }
\leq 
D(\gamma) \tau_{\gamma}(h),
\end{equation*}
where $D(\gamma)$ does not depend on $h$ and
\begin{equation*}
\tau_{\gamma}(h):=
\begin{cases}
h^{-\gamma}, & {\rm if} \ \gamma < 1, \\
h^{-1} \log h , & {\rm if} \ \gamma = 1, \\
h^{-1}, & {\rm if} \ \gamma > 1.
\end{cases}
\end{equation*}
\end{lemma}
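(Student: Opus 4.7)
The plan is to split the sum at $k \approx h$ and use the two elementary bounds
\[
\frac{1}{k^{\gamma}(k+h)} \leq \frac{1}{h\,k^{\gamma}} \quad \text{and} \quad \frac{1}{k^{\gamma}(k+h)} \leq \frac{1}{k^{\gamma+1}},
\]
the first being efficient when $k$ is small compared to $h$, the second when $k$ is large. Concretely, with $H:=\lfloor h\rfloor$, I would write
\[
\sum_{k=1}^{\infty}\frac{1}{k^{\gamma}(k+h)}
\leq \frac{1}{h}\sum_{k=1}^{H}\frac{1}{k^{\gamma}} + \sum_{k=H+1}^{\infty}\frac{1}{k^{\gamma+1}}
\]
and estimate each piece by an integral comparison.

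For the tail, $\sum_{k>H} k^{-\gamma-1} \leq \int_{H}^{\infty} x^{-\gamma-1}\,\dd x = \gamma^{-1} H^{-\gamma}$, which is $\leq C_{\gamma} h^{-\gamma}$, hence at most $C_\gamma h^{-1}$ when $\gamma \geq 1$ (using $h>1$). For the head, I would split into the three cases: if $\gamma<1$ then $\sum_{k=1}^{H} k^{-\gamma} \leq 1 + \int_{1}^{H} x^{-\gamma}\,\dd x \leq (1-\gamma)^{-1} H^{1-\gamma}$, so the head contributes at most $(1-\gamma)^{-1} h^{-\gamma}$; if $\gamma=1$ then $\sum_{k=1}^{H} k^{-1} \leq 1+\log H \leq C\log h$ (using $h>1$), giving head contribution $\leq C\,h^{-1}\log h$; if $\gamma>1$ then $\sum_{k=1}^{H} k^{-\gamma} \leq \zeta(\gamma) < \infty$, giving head contribution $\leq \zeta(\gamma)\,h^{-1}$.

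Combining head and tail in each case yields the three-case bound with some explicit constant $D(\gamma)$ depending only on $\gamma$, matching exactly the definition of $\tau_{\gamma}(h)$. The only mild subtlety is the logarithmic case $\gamma=1$, where I must be careful that $\log h$ does not get replaced by a weaker power; this is ensured by keeping the splitting point at $k\sim h$ rather than at a smaller value. No genuine obstacle is expected; the argument is a routine integral-test estimate once the splitting is performed.
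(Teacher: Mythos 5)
Your argument is correct and is essentially what the paper intends: its proof of this lemma is the single line ``analogous to Lemma \ref{lem.sum.est.2}'', i.e.\ the same splitting of the sum at $k\sim h$ followed by integral comparison that you carry out explicitly. One small caveat, which is really a defect of the statement itself rather than of your proof: in the case $\gamma=1$ the step $1+\log H\leq C\log h$ fails with a uniform $C$ as $h\to 1^{+}$ (indeed the left-hand side of the lemma tends to $\sum_k 1/(k(k+1))=1$ while $h^{-1}\log h\to 0$), so the estimate really requires $h$ bounded away from $1$, say $h\geq 2$ --- harmless here, since $h$ is chosen large via \eqref{ass.h}.
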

\begin{proof}
Proof is analogous to the one of Lemma \ref{lem.sum.est.2}.
\end{proof}

\begin{remark}\label{rem.4A}
A careful analysis of our proof and proper adjustments in the proofs of 
technical lemmas \ref{lem.sum.est.2} and \ref{lem.h} and in the inequalities related to
\eqref{Pn.est.2} show that we can weaken Condition \eqref{ass.b} in Theorem \ref{thm.RB} by assuming only that 

\begin{equation}\tag{4A}
\exists \beta > \frac 32, \ \
\exists M_b >0, \ \
\forall m,n \in \N, \ \ 
|b(\psi_m,\psi_n)| \leq \frac{M_b}{(\log (m+1) \log (n+1))^\beta}.
\end{equation}
\end{remark}

\section{Examples}
\label{sec.ex}

We start with the analysis of perturbations of the harmonic oscillator in $L^2(\R)$:
\begin{equation}
\label{ho.def}
\begin{aligned}
A &= -\frac{\dd^2}{\dd x^2} + x^2,
&\Dom(A) & = \{ \psi \in W^{2,2}(\R): x^2 \psi \in L^2(\R)\}, \\
a(\psi,\psi) & = \|\psi'\|^2 + \|x \psi\|^2, 
&\Dom(a) & = \{\psi \in W^{1,2}(\R): x \psi \in L^2(\R) \}.
\end{aligned}
\end{equation}
Eigenvalues of $A$ are $\mu_n=2n+1,$ $n \in \N_0$, and eigenfunctions are Hermite functions
\begin{equation}
\label{hn.expl}
h_n(x) = \frac{1}{(2^n n! \sqrt{\pi})^{\frac 12}}e^{-\frac{x^2}{2} }H_n(x), \quad n=0,1,2,\dots.
\end{equation}

\subsection{$\delta$ potential}\label{subsec.delta}
The first example is the perturbation by the $\delta$ potential placed in $x_0$ with coupling $\nu \in \C$, more precisely
\begin{equation*}
b_1(\phi,\psi)  = \nu \, \phi(x_0) \, \overline{\psi(x_0)}, \quad \nu \in \C,
\quad
\Dom(b_1)  = W^{1,2}(\R) .
\end{equation*}
In the following, we estimate $|b_1(h_m,h_n)| = |\nu| |h_m(x_0)| |h_n(x_0)|$.

If $x_0 =0$, the values $h_n(0)$ read, \cf~\cite[Eq.22.4.8, Eq.22.2.14]{Abramowitz-1964-55},
\begin{equation*}
\begin{aligned}
h_{2n-1}(0) &= 0, \ \ 
h_{2n}(0) & = \frac{(-1)^n ((2n)!)^{\frac{1}{2}}}{ \pi^{\frac{1}{4}} 2^n n! }.
\end{aligned}
\end{equation*}
Using the Stirling formula for the factorial,
$$n! = \sqrt{2\pi n} 
\left(
\frac{n}{e}
\right)^n e^{\lambda_n}, \quad \frac{1}{12n+1} < \lambda_n < \frac{1}{12n}, $$
\cf~\cite[Eq.6.1.38]{Abramowitz-1964-55}, \cite[\S 2.9]{Feller-1968}, yields
\begin{equation}\label{hn.bound}
|h_{2n}(0)| =  \frac{1+\mathcal{O}(\frac{1}{n})}{\pi^{\frac{1}{2}} n^{\frac{1}{4}}},
\end{equation}
hence the perturbation $b_1$ satisfies condition \eqref{ass.b} with $\alpha = 1/4$ if $x_0=0$. 

Further analysis, \cf~\cite[p.700]{Askey-1965-87} and further references in 
\cite{Adduci-2012-10}, shows that
\begin{equation}\label{hn.est}
|h_n(x)| \leq C
\begin{cases}
(N^{\frac{1}{2}} (N^{\frac{1}{2}} - x ))^{-\frac{1}{4}}, 
& 0 \leq x \leq N^{\frac{1}{2}} - N^{-\frac{1}{6}},  
\\[1mm]
N^{-\frac{1}{12}}, 
& N^{\frac{1}{2}} - N^{-\frac{1}{6}} \leq x \leq N^{\frac{1}{2}} + N^{-\frac{1}{6}},
\\[1mm] 
\displaystyle
\frac{\exp({- \xi N^{\frac{1}{4} } (x-N^{\frac{1}{2}})^{\frac{3}{2}}})}
{(N^{\frac{1}{2}} (x -N^{\frac{1}{2}}))^{\frac{1}{4}}}
,
& N^{\frac{1}{2}} + N^{-\frac{1}{6}} \leq x \leq (2N)^{\frac{1}{2}},
\\[3mm]
\exp(- \xi x^2), & x \geq (2N)^{\frac{1}{2}},
\end{cases}
\end{equation}
where $N=2n+1$ and $\xi >0$. Therefore we can apply our results, since 
$
|h_n(x_0)| \leq  C n^{-1/4}
$
when $n$ is sufficiently large, \eg~if $N^{1/2} \geq x_0 +1$.

\subsection{Function potential $V$}
\label{subsec.V.func}
We consider a form generated by a function potential
\begin{equation}
\label{V.form}
b_2(\phi,\psi):=\langle V \phi, \psi \rangle.
\end{equation}
At first let $V\in L^p(\R)$, $1 \leq p <\infty$. The condition \eqref{ass.b} holds for $b_2$, since
\begin{equation*}
\begin{aligned}
|\langle V h_m,h_n \rangle| &\leq \| V\|_{L^p} \|h_m h_n \|_{L^q} \leq \|V\|_{L^p} \|h_m h_n\|_{L^\infty}^\frac1p \|h_m h_n\|_{L^1}^\frac1q 
\\
&\leq 
C (mn)^{- \frac{1}{12p}}, \quad m,n \in \N,
\end{aligned}
\end{equation*}
where $1/p +1/q =1$ and we use $\|h_m h_n\|_{L^1} \leq 1$ and \eqref{hn.inf.est}. Hence the statement of Theorem \ref{thm.RB} holds.

A next step is to take $V \in  L(p,\tau)$, where
\begin{equation}\label{Lptau.def}
 L(p,\tau):= \left\{
 v:(1+|x|^2)^{-\frac\tau 2}|v(x)| \in L^p(\R) 
 \right\}, \quad 1 \leq  p < \infty,  \ \tau \geq 0,
\end{equation}
is introduced in \cite{Adduci-2012-10}; notice that $L^p(\R)$ is a special case for $\tau =0$. It has been showed in \cite{Adduci-2012-10} that for $2 \leq p < \infty$ and $n \in \N$, $n>1$,
\begin{equation}\label{Vhn.2.est}
\|V h_n \| \leq C
\begin{cases}
\displaystyle n^{\frac \tau2+t(p)}, & p \neq 4, \\[1mm]
\displaystyle  n^{\frac\tau 2 - \frac 18} \log n, & p=4,
\end{cases}
\end{equation}
with
\begin{equation}\label{tp.def}
t(p) = 
\begin{cases}
-\frac{1}{6}(1-\frac{1}{p}), & 2 \leq p <4, \\[1mm]
-\frac{1}{2p}, & 4 \leq p<\infty,
\end{cases}
\end{equation}
see \cite[Lem.5.2]{Adduci-2012-10} for details. Hence the condition \eqref{trad.ass.B} holds for $V \in L(p,\tau)$ if $2 \leq p < \infty$ and $\tau/2 +t(p) < 0$. (The condition \eqref{trad.ass.B} holds for $V$ also if $\tau/2 +t(p) \leq 0$, $(p,\tau)\neq (4,1/4)$, see  \cite[Prop.5.4]{Adduci-2012-10}.)
Therefore the eigensystem of $T=A+V$ contains a Riesz basis, 
\cf~\cite[Thm.1.3]{Adduci-2012-10}. 
Now we can explain this result of \cite{Adduci-2012-10} as a corollary of Theorem \ref{thm.RB}; indeed
\begin{equation*}
\begin{aligned}
|b_2(h_m,h_n)| &= |\langle V h_m, h_n \rangle| 
\leq 
\min (\|V h_m\|,\|V h_n\|)
\leq
\|V h_m\|^{\frac 12} \|V h_n\|^{\frac 12}
\end{aligned}
\end{equation*}
and the inequalities \eqref{Vhn.2.est} imply \eqref{ass.b} with some $\alpha > 0$.

The approach with forms enables us to include potentials $V \in L(p,\tau)$ with $1 \leq p <2$ and a suitable $\tau$ in our analysis and obtain the following claim.
\begin{theorem}\label{thm.V.Lptau}
Let $A$ be the harmonic oscillator, \cf~\eqref{ho.def}, and $V \in L(p,\tau)$ with $1 \leq p < \infty $, $\tau \geq 0$ and $\tau/4 + t(2p) <0$, where $t(p)$ is defined in \eqref{tp.def}. 
Then the statement of Theorem \ref{thm.RB} holds for $T = A +V$ defined as a sum of forms.
\end{theorem}
\begin{proof}
The proof is based on the inequality 
\begin{equation}\label{hn.q.norm}
\|h_n(x) (1+|x|^2)^\frac{\tilde \tau}{2}\|_{L^{\tilde q}} 
\leq
C 
\begin{cases}
n^{\frac{\tilde \tau}{2} + t(\tilde p)}, & \tilde p \neq 4, \\[1mm]
n^{\frac{\tilde \tau}{2} -\frac 18} \log n, & \tilde p =4,
\end{cases}
\end{equation}
where $1/\tilde q +1/\tilde p = 1/2$ and $n \in \N$, $n>1$, 
proved in \cite[Sec.6.2]{Adduci-thesis}, \cite[Eq.(41)--(46)]{Adduci-2012-10} using \eqref{hn.est}. 
If $V \in L(p,\tau)$, then
\begin{equation}\label{b2.hV.est}
\begin{aligned}
|b_2(h_m,h_n)| & \leq \int_{\R} |V(x)| \, (1+|x|^2)^{-\frac\tau 2} \, |h_m(x)| \, |h_n(x)|\, (1+|x|^2)^{\frac \tau 2} \dd x 
\\
& \leq \|V(x) (1+|x|^2)^{- \frac \tau 2}\|_{L^p} 
\|h_m(x) h_n(x) (1+|x|^2)^{\frac \tau 2}\|_{L^q}
\\
& \leq C \|h_m(x) (1+|x|^2)^{\frac \tau 4}\|_{L^{2q}} \|h_n(x) (1+|x|^2)^{\frac \tau 4}\|_{L^{2q}},
\end{aligned}
\end{equation}
where $1/p +1/q =1$. The inequality \eqref{hn.q.norm} with $\tilde \tau = \tau/2$ and $\tilde q =2q$ yields, for all $n \in \N$, $n>1$,
\begin{equation*}
\|h_n(x) (1+|x|^2)^{\frac \tau 4}\|_{L^{2q}} 
\leq C 
\begin{cases}
n^{\frac \tau 4 + t( 2 p)}, & p \neq 2, \\[1mm]
n^{\frac \tau 4 - \frac 18} \log n, & p = 2.
\end{cases} 
\end{equation*}
Therefore the condition \eqref{ass.b} holds for $b_2$ if $\tau/4 + t(2p) <0$ and Theorem \ref{thm.RB} can be applied.
\end{proof}

\subsection{Singular potential with compact support}
\label{subsec.sing.V.ho}
Another example is a form generated by a singular potential $V \in W^{-s,2}(\R),$ $0 < s < 1/2$, with a compact support, contained in an interval $[-p,p]$,
\begin{equation*}
b_3(\phi,\psi) := (V, \phi \overline{\psi}).
\end{equation*}
For $n$ sufficiently large, \eg~$N^{1/2} \geq 2 p +1$, with $N=2n+1$, we have  
\begin{equation*}
|h_n(x)| \leq  C n^{-\frac{1}{4}}, \quad 
|h_n'(x)| \leq C n^{\frac{1}{4}}, \quad x \in [-3p/2,3p/2].
\end{equation*}
These estimates follow from \eqref{hn.est} and the relation for the derivative of Hermite functions
\begin{equation}\label{hn.prime}
h'_n(x) = \sqrt{\frac{n}{2}} h_{n-1}(x) + \sqrt{\frac{n+1}{2}} h_{n+1}(x).
\end{equation}
Therefore, for $m,n \in \N$, 
\begin{equation*} 
|b_3(h_m,h_n)|  
= 
|(V, h_m h_n)| \leq  C \|h_m h_n \|_{L^2((-p,p))}^{1-s} \|h_m h_n\|_{W^{1,2}((-p,p))}^s 
\leq 
C (mn)^\frac{2s-1}4
\end{equation*}
and the condition \eqref{ass.b} is satisfied for $s < 1/2$. Hence the statement of  Theorem \ref{thm.RB} holds.

The assumption on the compact support of singular potential can be omitted, but the range of $s$ is more restrictive. Namely, if $V \in W^{-s,2}(\R),$ $0 < s < 1/11$, then the statement of Theorem \ref{thm.RB} holds. The proof of this claim is analogous, but we use \eqref{hn.inf.est} and \eqref{hn.prime} or more general inequalities for weighted polynomials, \cf~\cite{Milne-1931-33, Zalik-1983-37}.

\begin{remark}
The condition \eqref{ass.b} puts restrictions on the system of \emph{linear} functionals of $V$. Therefore, if we can represent $V$ as a finite sum of potentials $\{V_j\}$ in such a way that each $V_j$ satisfies condition \eqref{ass.b}, 
then $V$ satisfies the condition \eqref{ass.b} as well and the statement of Theorem \ref{thm.RB} on the Riesz basisness of eigensystem of $T = A +V$ holds. In particular, the following is true.
\end{remark} 

\begin{corollary}\label{cor.HO}
Let $A$ be the harmonic oscillator, \cf~\eqref{ho.def}.
If $V = V_1 + V_2 + V_3$, where $ V_3 \in W^{-s,2}(\R)$, $0 < s < 1/2$, with compact support, $V_2 \in L(p, \tau)$ with $1 \leq p < \infty$, $\tau \geq 0$ and $\tau/4 +t(2p)<0$, \cf~\eqref{Lptau.def}, \eqref{tp.def}, and $V_1 \in L^1(\R)$, then the statement of Theorem \ref{thm.RB} holds for $T = A +V$ defined as a sum of forms.
\end{corollary}

\subsection{Infinite number of $\delta$-potentials}

We can go to infinite sums $V = \sum_j V_j $, but then we have to watch
carefully the behavior of constants $M_j$ in \eqref{ass.b}, \ie~
\begin{equation*}
 | \langle V_j h_m, h_n \rangle | \leq \frac{M_j} {m^\alpha n^\alpha}.
\end{equation*} 
%
To illustrate this point we consider the infinite number of $\delta$ potentials.
\begin{example}
Let $A$ be the harmonic oscillator, \cf~\eqref{ho.def}, and let 
\begin{equation*}
b_4(\phi,\psi) = \sum_{k =-\infty}^{\infty} \nu_k \, \phi(p_k) \, \overline{\psi(p_k)},
\end{equation*}
where points $\{p_k\}_{k \in \Z}$ and coupling constants $\{\nu_k\}_{k\in \Z}$ satisfy $p_k =(\sgn k) \, |k|^{\gamma}$, $0< \gamma \leq 1$, and $\nu_k = \mathcal O(|k|^{-\beta})$, $\beta \geq 0$, respectively. If $\beta + \gamma >1$, then $b_4$ satisfies the condition \eqref{ass.b} and therefore the statement of Theorem \ref{thm.RB} holds.
\end{example}
\begin{proof} 
We intend to determine the relation between $\beta$ and $\gamma$ guaranteeing that $b_4$ satisfies the assumption \eqref{ass.b}. In the first step, we use H\"older inequality and the fact that Hermite functions are either even or odd and obtain
\begin{equation*}
|b_4(h_m,h_n)| \leq 
2 \left(
\sum_{k =0}^{\infty} |\nu_k| |h_m(p_k)|^2
\right)^{\frac 12}
\left(
\sum_{k =0}^{\infty} |\nu_k| |h_n(p_k)|^2
\right)^{\frac 12}.
\end{equation*}
The estimates are based on the behavior of Hermite functions \eqref{hn.est} and are divided into four parts.
For $ k \leq K_1$ with $K_1 \in \N$ such that $(N^{1/2} - N^{-1/6})^{1/\gamma}-1 \leq K_1 \leq (N^{1/2} - N^{-1/6})^{1/\gamma}$, we have
\begin{equation*}
\sum_{k=1}^{K_1} \frac{N^{-\frac 14}}{k^{\beta}(N^{\frac 12}-k^{\gamma})^{\frac 12}  } 
\leq 
 \int_1^{K_1} \frac{N^{-\frac 14} \dd x}{x^\beta (N^{\frac 12}-x^\gamma)^{\frac 12} }
 +  \frac{2N^{-\frac 14}}{p_*^\beta (N^{\frac 12}-p_*^{\gamma})^{\frac 12}  },
\end{equation*} 
where $1 \leq p_* \leq K_1$ is such that the maximum of $x^{-\beta}(N^{1/2}-x^{\gamma})^{-1/2}$ is attained at $x=p_*$.  By a very crude estimate without searching for the actual value of $p_*$, the second term is $O(N^{-1/6})$. The substitution $x^{\gamma} = N^{1/2} y$ in the integral leads to
\begin{equation*}
\int_1^{K_1} \frac{ N^{-\frac 14}\dd x}{x^\beta (N^{\frac 12}-x^\gamma)^{\frac 12} }
\leq 
\frac{N^{ \frac{1-\beta -\gamma}{2\gamma} } }{\gamma} 
\int_{N^{-\frac 12}}^{  1} 
\frac{ y^{ \frac{1-\beta}{\gamma} -1 } \dd y}{ (1-y)^{\frac 12}}.
\end{equation*}
As $N \to + \infty$ the integral is $\mathcal{O}(1)$ for $0< \beta < 1$, $\mathcal{O}(\log N)$ for $\beta =1$, and $\mathcal{O}(N^{\frac{\beta-1}{2\gamma}})$ for $\beta >1$.
Taking into account the behavior of the prefactor,  
assumption \eqref{ass.b} is satisfied if $\beta > 1-\gamma$.

In the second part, we have 
\begin{equation*}
\begin{aligned}
\sum_{k=K_1+1}^{K_2} \frac{N^{-\frac 16}}{k^{\beta} }
&\leq \frac{N^{-\frac 16}}{(K_1+1)^{\beta}} + \int_{K_1+1}^{K_2} \frac{N^{-\frac 16} \dd x}{x^\beta},
\end{aligned}
\end{equation*} 
where $(N^{1/2} + N^{-1/6})^{1/\gamma}-1 \leq K_2 \leq (N^{1/2} + N^{-1/6})^{1/\gamma}$. The first term is $\mathcal{O}(N^{\frac{-\gamma-3\beta}{6 \gamma}})$, giving no condition on $\beta$. For $\beta \neq 1$, the integral can be estimated as
\begin{equation*}
\begin{aligned}
\int_{K_1+1}^{K_2} \frac{N^{-\frac 16} \dd x}{x^\beta}
& \leq
\int_{(N^{\frac 12}-N^{-\frac 16})^{\frac 1\gamma} }^{(N^{\frac 12}+N^{-\frac 16})^{\frac 1\gamma} } \frac{N^{-\frac 16} \dd x}{x^\beta} 
\\
&\leq \frac{N^{-\frac 16}}{|1-\beta|}
\left|
(N^{\frac 12}+N^{-\frac 16})^{\frac{1-\beta}{\gamma}} - (N^{\frac 12}-N^{-\frac 16})^{\frac{1-\beta}{\gamma}}
\right|
\\
& = \frac{N^{\frac{3-3\beta-\gamma}{6 \gamma}}}{|1-\beta|} 
\left|
\frac{2(1-\beta)}{\gamma}N^{-\frac 23} + \mathcal{O}(N^{-\frac 43})
\right|  
= \mathcal{O}(N^{\frac{3-3\beta-5\gamma}{6 \gamma}})
\end{aligned}
\end{equation*}
For $\beta =1$ the integral is $o(N^{-1/6})$. Therefore the condition on $\beta$ reads $\beta > 1 - 5\gamma/3$, which is less restrictive than $\beta >1-\gamma$ from the first part. Using analogous estimates, it can be verified that the conditions on $\beta$ from the third and fourth part are also weaker. Therefore, we will make the assumption $\beta > 1-\gamma$ to guarantee that the condition \eqref{ass.b} holds and therefore the statement of Theorem \ref{thm.RB} holds.
\end{proof}

\subsection{Perturbations of anharmonic oscillators} Now we make a few remarks to extend
the results of \cite{Adduci-2012-73}. We perturb self-adjoint operators in $L^2(\R)$:
\begin{equation}
\label{aho.def}
\begin{aligned}
A &= -\frac{\dd^2}{\dd x^2} + |x|^\beta, \quad \beta \geq 2,
\\
\Dom(A) & = \{ \psi \in W^{2,2}(\R): |x|^\beta \psi \in L^2(\R)\}, \\
a(\psi,\psi) & = \|\psi'\|^2 + \||x|^{\frac \beta 2} \psi\|^2, 
\\
\Dom(a) & = \{\psi \in W^{1,2}(\R): |x|^\frac\beta 2 \psi \in L^2(\R) \}.
\end{aligned}
\end{equation}
The spectrum of $A$ is discrete and eigenvalues are simple. Moreover, eigenvalues $\mu_n$ and normalized eigenfunctions $\phi_n$ satisfy
\begin{align}
&\lim_{n\to \infty} 
\left(
2 \lambda_n^{\frac{2 + \beta}{2\beta}} \Omega_\beta - \left(n+\frac 12 \right)\pi
\right) =0 \quad \textrm{with} \quad \Omega_\beta = 2 \int_0^1(1-x^\beta)^\frac12 \dd x,
\\[3mm]
&|\phi_n(x)| 
 \leq 
C \lambda_n^{\frac{1}{2\beta} - \frac14} 
\begin{cases}
\displaystyle 
(\lambda_n-x^{\beta})^{-\frac 14}, 
& 0 \leq x < \lambda_n^\frac 1\beta - \delta_n, 
\\[2mm]
\displaystyle
(\lambda_n-|\lambda_n^\frac1\beta - \delta_n|^{\beta})^{- \frac14}, 
& \lambda_n^\frac 1\beta - \delta_n \leq x \leq \lambda_n^\frac 1\beta + \delta^{(1)}_n, 
\\[2mm]
\displaystyle
\frac{
\exp\left(-\int_{\lambda_n^{\frac 1\beta}}^x (t^\beta - \lambda_n)^{\frac 12}\dd t\right)}
{(x^\beta-\lambda_n)^{\frac 14}}, 
& x > \lambda_n^\frac1\beta + \delta^{(1)}_n, 
\end{cases}
\end{align}
where $\delta_n$ and $\delta^{(1)}_n$ are defined by
\begin{equation}
\int_{\lambda_n^\frac 1 \beta - \delta_n}^{\lambda_n^\frac 1 \beta} (\lambda_n - x^\beta)^\frac12 \dd x =1,
\quad 
\int_{\lambda_n^\frac 1 \beta}^{\lambda_n^\frac 1 \beta+ \delta^{(1)}_n} (x^\beta - \lambda_n)^\frac12 \dd x =1
\end{equation}
and satisfy $\delta_n, \delta^{(1)}_n = \mathcal O(\lambda_n^{-\frac{1-\beta}{3\beta}})$ 
and $\delta_n^{-1}, (\delta^{(1)}_n)^{-1} = \mathcal O(\lambda_n^\frac{1-\beta}{3\beta})$, 
see \cite{Titchmarsh-1954-5,Giertz-1964-14}, \cite[Sec.22.27]{Titchmarsh-1958-book2} and \cite{Mityagin-2013arx} for more details.

The growth condition on eigenvalues implies the existence of constant $C>0$ and $N \in \N$, both depending on $\beta$, such that for all $n > N$
\begin{equation*}
\lambda_{n+1} - \lambda_n \geq C n^{\alpha-1}, \quad \textrm{with} \quad \alpha = \frac{2\beta}{\beta+2},
\end{equation*}
\cf~\cite[Sec.8.2]{Adduci-2012-73}.
Therefore, if $\beta \geq 2$, the growth condition on $\mu_n$ in \eqref{ass.A} is satisfied.

As a perturbation we consider form $b_2$, \cf~\eqref{V.form}, generated by a function potential $V \in L(p,\tau)$, \cf~\eqref{Lptau.def}, and formulate an analogous result to Theorem \ref{thm.V.Lptau} for harmonic oscillator.
\begin{theorem}\label{thm.aho.Lptau}
Let $A$ be the anharmonic oscillator, \cf~\eqref{aho.def}, and $V \in L(p,\tau)$ with $1 \leq p < \infty$, $\tau \geq 0$ and $\tau / (\beta+2) + t(2p,\beta) <0$, 
where 
\begin{equation}
t(p,\beta) :=
\begin{cases}
- \frac16 \left(1 - \frac{4(\beta-1)}{\beta+2} \frac 1p \right), 
& 1 \leq p <2,
\\ 
-\frac{2}{\beta+2} \frac{1}{p},
& 2 \leq  p \leq \infty,
\end{cases}
\end{equation}
Then the statement of Theorem \ref{thm.RB} holds for $T = A +V$ defined as a sum of forms.
\end{theorem}
\begin{proof}
The proof follows the lines of the one of Theorem \ref{thm.V.Lptau}, but the estimate for $\phi_n$, $n\in \N$, $n>1$, is used,
\begin{equation}\label{phi.Lq}
\begin{aligned}
&\|\phi_n(x)(1+|x|^2)^{\frac{\tilde \tau}2} \|_{L^{\tilde q}} 
\leq
C 
\begin{cases}
\displaystyle
n^{\frac{2}{\beta+2} \left(\tilde \tau - \frac 1{\tilde p} \right)  }, & 2 \leq  \tilde p < 4, 
\\
\displaystyle
n^{\frac{2}{\beta+2} \left(\tilde \tau - \frac 14 \right) } \, \log n, & \tilde p = 4, 
\\
\displaystyle
n^{-\frac 16 + \frac{2}{\beta+2} \left(\tilde \tau + \frac{\beta-1}{3\tilde p} \right)  }, & \tilde p > 4, 
\end{cases}
\end{aligned}
\end{equation}
where $1/ \tilde q + 1 / \tilde p =1/2 $. If $V \in L(p,\tau)$, then the same procedure as in \eqref{b2.hV.est} gives
\begin{equation*}
|b_2(\phi_m,\phi_n)| \leq C \|\phi_m(x) (1+|x|^2)^{\frac \tau4} \|_{L^{2q}} \|\phi_n(x) (1+|x|^2)^{\frac \tau 4} \|_{L^{2q}},
\end{equation*}
where $1/p + 1/q =1$, and the claim is then obtained from inequalities \eqref{phi.Lq} with $\tilde \tau = \tau/2$ and $\tilde q = 2q$, \ie~$\tilde p =2p$. 
\end{proof}

\section{Conclusions}
\label{sec.concl}

The positive results, \ie~the claims that the eigensystem of $T$ contains a Riesz basis, are obtained if \emph{the local subordination} condition in the sense of operators \eqref{trad.ass.B}, used in \cite{Adduci-2012-10, Adduci-2012-73, Shkalikov-2010-269}, or in the sense of forms \eqref{ass.b} is satisfied. 
The usual subordination condition in the sense of operators or forms, \cf~\eqref{form.p.sub}, is 
used in previous works \cite{Markus-1988}, \cite{Agranovich-1994-28}, \cite{Wyss-2010-258} where no  claims on the Riesz basisness for the perturbations of the harmonic oscillator are given (except a weaker result of Riesz basis of subspaces for bounded perturbations, \cf~\cite{Agranovich-1994-28}). As it follows from \cite[Thm.2.6]{Mityagin-2013arx}, eq. (2.18), the example
$$-\frac{\dd^2}{\dd x^2} + x^2 + 2 \ii x, $$
shows that the subordination does not help to claim even the basisness of eigensystem, since in this case the perturbation $2\ii x$ is subordinated, however, the norms of Riesz projections $P_n$ grows in $n$, namely
$$\lim_{n \to \infty} \frac{1}{\sqrt{n}} \log \|P_n\| = 2^{\frac 32}.$$
In fact, various rates of spectral projection growth can be obtained by subordinated perturbations of harmonic and anharmonic oscillators, \cf~\cite{Mityagin-2013arx}.

{\footnotesize
\bibliographystyle{acm}
\bibliography{C:/Data/00Synchronized/references}
}

\end{document}